\numberwithin{equation}{section}
\title{Инвариантные подпространства
оператора обобщенного обратного сдвига и рациональные функции}
\author{ О.~А.~Иванова, С.~Н.~Мелихов, Ю.~Н.~Мелихов}
\date{}
\newtheorem{theorem}[]{Теорема}
\newtheorem{corollary}[]{Следствие}
\newtheorem{lemma}{Лемма}
\theoremstyle{definition}
\newtheorem{definition}[]{Определение}
\newtheorem{remark}[]{Замечание}
\begin{document}

\maketitle \thispagestyle{empty}

\begin{abstract}
Приводится полная характеризация собственных замкнутых
инвариантных подпространств оператора обобщенного обратного сдвига
(оператора Поммье) в пространстве Фреше всех функций, голоморфных
в односвязной области $\Omega$ комплексной плоскости, содержащей
начало. В случае, когда порождающая этот оператор функция не имеет
нулей в $\Omega$,  все такие подпространства являются
конечномерными. Если дополнительно $\Omega$ совпадает со всей
комплексной плоскостью, то рассматриваемый оператор обобщенного
обратного сдвига является одноклеточным. Если эта функция имеет
нули в $\Omega$, то семейство упомянутых инвариантных
подпространств распадается на два клаcса: первый состоит из
конечномерных подпространств, а второй --- из бесконечномерных.

\medskip

{\bf Ключевые слова: } Инвариантное подпространство, пространство
голоморфных функций, оператор обратного сдвига
\end{abstract}

\section*{Введение}
Пусть $\Omega$ --- область в $\mathbb C$, содержащая начало; $H(\Omega)$ ---
пространство всех голоморфных в $\Omega$ функций с топологией компактной сходимости. Фиксированная функция
$g_0\in H(\Omega)$ такая, что $g_0(0)=1$, задает оператор обобщенного обратного сдвига (оператор Поммье)
$D_{0,g_0}(f)(t):=\frac{f(t)-g_0(t)f(0)}{t}$, линейно и непрерывно действующий в $H(\Omega)$.
Если $g_0\equiv 1$, то $D_{0,g_0}$ --- обычный оператор $D_0$ обратного сдвига.
(В общем случае $D_{0,g_0}$ является одномерным возмущением
$D_0$.) Как отметил Ю.~С. Линчук \cite{LINCHUK}, линейный
непрерывный оператор в $H(\Omega)$ является левым обратным к
оператору умножения на независимую переменную тогда и только
тогда, когда он совпадает с некоторым оператором $D_{0,g_0}$.

В настоящей работе изучаются собственные замкнутые
$D_{0,g_0}$-инвариант\-ные подпространства $H(\Omega)$.
Исследование инвариантных подпространств линейных непрерывных
операторов в локально выпуклых пространствах имеет большое
значение как вследствие внутренних потребностей функционального
анализа и теории функций, так и в связи с многочисленными
приложениями (см. обзор Н.~К. Никольского \cite{NIK}). В настоящее
время имеется довольно обширная литература, посвященная
циклическим векторам оператора $D_0$ в пространствах голоморфных
функций и инвариантным подпространствам $D_0$ в банаховых
пространствах голоморфных функций. При этом элемент $x$ локально
выпуклого пространства $E$ называется циклическим вектором
линейного непрерывного оператора $A$ в $E$, если орбита $\{
A^n(x)\,|\, n\in\mathbb N\cup\{0\}\}$ полна в $E$, т.\,е.
замыкание ее линейной оболочки в $E$ совпадает с $E$.
Непосредственная связь цикличности и инвариантных подпространств
заключается в следующем: элемент $x\in E$ --- циклический вектор
$A$ в $E$ тогда и только тогда, когда $x$ не принадлежит ни одному
собственному замкнутому $A$-инвариантному подпространству $E$.

Одним из первых исследований в этом направлении является,
по-види\-мому, статья Р. Дугласа, Г. Шапиро, А. Шилдса
\cite{DSHSH}, в которой была решена проблема Д. Сарасона
характеризации циклических векторов и инвариантных подпространств
оператора $D_0$ в пространстве Харди $H^2$ в единичном круге
$\mathbb D:=\{z\in\mathbb C\,|\,|z|<1\}$. В \cite{DSHSH} была
обнаружена связь $D_0$-нецикличности функций из $H^2$ с
возможностью их псевдопродолжения через граничную окружность до
некоторой мероморфной функции в $\{ z\in\mathbb C\,|\, |z|>1\}$
\cite[теорема 2.2.1]{DSHSH}. Работа \cite{DSHSH} послужила толчком
к интенсивному развитию соответствующей теории. К исследованию,
предпринятому в настоящей статье, непосредственное отношение имеют
связи циклических векторов и инвариантных подпространств $D_0$ с
рациональной аппроксимацией (см., например, \cite{TUM1},
\cite[\S~4]{DSHSH}, \cite{GRIBOV}). В связи с этим отметим
следующий факт, вытекающий из результатов Г.~Ц. Тумаркина и
объединяющий банахов и небанахов случаи: функция $f$, голоморная в
некотором круге $|z|<R$, $R>1$, не является циклическим вектором
$D_0$ в $H^2$ в том и только в том случае, когда $f$ является
рациональной дробью. Для функции $f\in H(\Omega)$ ее
$D_0$-нецикличность в $H(\Omega)$ равносильна тому, что $f$ ---
рациональная дробь. Из работ в данном направлении, относящихся к
пространствам, отличным от $H^2$, отметим монографию Й. Цимы и У.
Росса \cite{CIRO}, в которой исследованы циклические векторы $D_0$
в пространстве $H^p$ для показателей $p\in(0,\infty)$. А. Алеман, С. Рихтер, К. Сандберг \cite{ARS} исследовали (в терминах псевдопродолжения
в $|z|>1$) свойства элементов собственных замкнутых инвариантных подпространств оператора обратного
сдвига в гильбертовых пространствах голоморфных функций с регулярной нормой. Примерами таких
пространств являются пространства Харди в $\mathbb D$, весовые
пространства Бергмана. По поводу других работ в этом направлении
см. библиографию в \cite{CIRO}, \cite{ARS}. В статье
\cite{ITOGI17} были охарактеризованы собственные замкнутые
$D_{0,g_0}$-инвариантные подпространства пространства целых
функций экспоненциального типа, реализующего посредством
преобразования Фурье-Лапласа сильное сопряженное к пространству
ростков функций, голоморфных на выпуклом локально замкнутом
подмножестве $\mathbb C$.

Оператор $D_{0,g_0}$, рассматриваемый здесь, действует в
пространстве Фреше $H(\Omega)$, которое ненормируемо. Мягкость
топологии в $H(\Omega)$ приводит к наличию  широкого множества
циклических векторов. Их описание  более элементарно, чем в
известных банаховых случаях, а граничное поведение нециклических
функций $f\in H(\Omega)$ (если $g_0$ не имеет нулей в $\Omega$)
жесткое: нецикличность $f$ равносильна голоморфному продолжению
$f/g_0$ до мероморфной функции в расширенной комплексной
плоскости, т.\,е. $f/g_0$ должна быть рациональной функцией. Это
приводит к "более алгебраическому" \, описанию соответствующих
циклических векторов и инвариантных подпространств в $H(\Omega)$.
Характеризация циклических векторов оператора $D_0$ в $H(\Omega)$
для односвязной области $\Omega$ была получена Ю.~А. Казьминым
\cite{KAZ}, для конечно-связных областей $\Omega$ --- Н.~Е. Линчук
\cite{LINCHUKNE}. В случае, когда $\Omega$ --- круг, ранее она
была дана М.~Г. Хаплановым \cite{KHAPL}. Ю.~С. Линчук
\cite{LINCHUK} описал циклические векторы $D_{0,g_0}$ в
$H(\Omega)$ для односвязной области $\Omega$ при предположении,
что $g_0$ не имеет нулей в $\Omega$. Это ограничение снято в
\cite{UFA15}. Заметим, что Ж. Годфруа и Дж. Шапиро \cite{GOSHA}
исследовали более сильное, чем цикличность, динамическое свойство
-- гиперцикличность -- линейного непрерывного оператора в
банаховом пространстве, названного ими также {\it обобщенным
обратным сдвигом}. (По поводу используемой терминологии см.
замечание \ref{GOSHA}.)

Главные результаты статьи --- теоремы 2--4, описывающие, соответственно, случаи, когда
$g_0$ не имеет нулей в $\Omega\ne\mathbb C$, имеет нули в $\Omega\ne\mathbb C$
(при этом область $\Omega$ односвязна)
и $\Omega=\mathbb C$.
Оказалось, что ситуации, когда $g_0$ не имеет нулей и имеет нули в $\Omega$, и похожи, и существенно отличаются.
В первой из них все собственные замкнутые $D_{0,g_0}$-инвариантные подпространства конечномерны
и задаются конечным числов полюсов вне $\Omega$ и порядками соответствующих рациональных функций,
а во второй к ним добавляются и бесконечномерные подпространства, определяемые нулями функции $g_0$.
Оператор $D_{0,g_0}$ является одноклеточным в $H(\Omega)$ тогда и только тогда, когда
$\Omega=\mathbb C$ и $g_0$ не имеет нулей в $\mathbb C$.

Основных методов, используемых в данной работе, два. Первый важен в случае, когда $g_0$ имеет нули и заключается в
использовании "экстремальных" \, функций из $D_{0,g_0}$-инвариантных подпространств,
обращающихся в нуль на заданном подмножестве нулевого множества $g_0$ с заданными кратностями
и не равных нулю на остальной части нулевого множества $g_0$. Второй --- естественно возникающий
метод "просеивания" \,
рациональных функций с помощью многочленов от оператора обратного сдвига.

\section{Предварительные сведения. Вспомогательные результаты}

Пусть $g_0$ --- голоморфная в начале функция такая, что $g_0(0)=1$.
Она задает оператор обобщенного обратного сдвига (оператор Поммье) $D_{0, g_0}$
в пространстве ростков функций, голоморфных в начале:
$$
D_{0,g_0}(f)(t):=\begin{cases}
\frac{f(t)-g_0(t)f(0)}{t}, & \text{$t\ne 0$,}\\
f'(0)-g_0'(0)f(0),& \text{$t=0$.}
\end{cases}
$$
Для произвольной области $\Omega$ в $\mathbb C$, содержащей начало,
оператор $D_{0,g_0}$
линейно и непрерывно действует в пространстве $H(\Omega)$ всех голоморфных в $\Omega$ функций с топологией
компактной сходимости.

Ниже существенно используется описание циклических векторов $D_{0,g_0}$ в $H(\Omega)$.
При этом элемент $x$ локально выпуклого пространства $E$ называется циклическим вектором
линейного непрерывного оператора $A: E\to E$, если замыкание линейной оболочки
орбиты $\{A^n(x)\,|\, n\ge 0\}$ совпадает с $E$.
Приведем результат из \cite{UFA15}.

\begin{theorem}\label{cycl}
Пусть $\Omega$ --- односвязная область в $\mathbb C$, содержащая начало.
Для $f\in H(\Omega)$ следующие утверждения равносильны:

\begin{itemize}
\item[(i)] $f$ --- циклический вектор $D_{0,g_0}$ в $H(\Omega)$.
\item[(ii)] Функции $f$ и $g_0$ не имеют общих нулей в $\Omega$ и не
существует рациональной функции $R$ такой, что $f=Rg_0$.
\end{itemize}
\end{theorem}

Эта теорема применяется далее в контексте следующего утверждения:
собственное замкнутое $D_{0,g_0}$-инвариантное подпространство $S$ пространства $H(\Omega)$ не содержит ни одного циклического вектора $D_{0,g_0}$ в
$H(\Omega)$.

\medskip
Далее до конца \S\,1 \, $\Omega$ --- область в $\mathbb C$,
содержащая начало. Для $h\in H(\Omega)$, $Q\subset H(\Omega)$
полагаем $hQ:=\{hf\,|\, f\in Q\}$. Пусть $\mathbb C[z]$ --- кольцо
всех многочленов над полем $\mathbb C$, $\mathbb C[z]_n$, $n\ge
0$, --- множество всех многочленов над $\mathbb C$ степени не выше
$n$, $\mathbb C(z)$ --- кольцо всех рациональных функций над полем
$\mathbb C$; $\mathcal Z_0$ --- множество всех функций из
$H(\Omega)$, имеющих хотя бы один общий нуль с $g_0$; $\mathbb
C^-_\Omega(z)$ --- множество всех правильных рациональных дробей,
голоморфных в $\Omega$. Если $\Omega=\mathbb C$, то $\mathbb
C^-_\Omega(z)=\{0\}$. Из теоремы 1 вытекает, что все собственные
замкнутые $D_{0,g_0}$-инвариантные подпространства $H(\Omega)$
содержатся в $\mathcal Z_0\cup \left(g_0\mathbb C(z)\right)$.

Отметим, что $\mathbb C^-_\Omega(z)={\rm
span}\left\{q_{\lambda,n}\,|\,\lambda\in\mathbb C\backslash\Omega,
n\in\mathbb N\right\}$, где
$q_{\lambda,n}(t):=\frac{1}{(t-\lambda)^n}$, $\lambda\in \mathbb
C$, $n\in\mathbb N$. При этом для множества $Q$ в линейном
пространстве $L$ символ ${\rm span}\,Q$ обозначает линейную
оболочку $Q$ в $L$.

Ниже будем использовать следующую терминологию из \cite[гл.\,3, \S\,5]{BEG}.
Кратным многообразием в
$\Omega$ называется конечная или бесконечная последовательность $W$
пар $(\lambda_k, m_k)$, где $Z(W):=\{\lambda_k\}$ --- дискретное подмножество $\Omega$ и
$m_k\in\mathbb N$ для любого $k$. Для непустого кратного многообразия $W=\{(\lambda_k, m_k)\}$ в $\Omega$
введем множество
$$
S(W):=\{f\in H(\Omega)\,|\, f^{(j)}(\lambda_k)=0,\,
0\le j\le m_k - 1 \,\mbox{ для любого }\, k\};
$$
$S(W)$ --- собственное замкнутое подпространство $H(\Omega)$.
Множества $S(W)$ --- это в точности все собственные замкнутые идеалы в топологической алгебре
$H(\Omega)$ с операцией обычного умножения функций;
они играют важную роль и при описании $D_{0,g_0}$-инвариантных подпространств $H(\Omega)$,
если $g_0$ имеет нули в $\Omega$.

Для удобства будем записывать кратное многообразие в $\Omega$ и так:
$W=\{(\lambda, m_\lambda)\,|\,\lambda\in\Lambda\}$, где $\Lambda$ --- конечное или счетное
дискретное подмножество $\Omega$, $m_\lambda\in\mathbb N$.

Для функции $h\in H(\Omega)$ символ $Z(h)$ обозначает множество всех нулей
$h$ в $\Omega$, $m(\lambda,h)$ --- кратность нуля $\lambda\in Z(h)$, а $W(h)$  ---
нулевое многообразие $h$, т.\,е. множество всех пар $(\lambda, m(\lambda, h))$, $\lambda\in Z(h)$.
Если $\lambda\in\mathbb C$ не является нулем $h$, то полагаем $m(\lambda, h):=0$.
Для множества $S\subset H(\Omega)$, $S\ne\{0\}$, через $Z(S)$ обозначим множество
всех общих нулей функций из $S$: $Z(S):=\bigcap\limits_{h\in S} Z(h)$. В случае $Z(S)\ne\emptyset$
кратным многообразием $S$
назовем множество $W(S)$ пар $(\lambda, m(\lambda, S))$, $\lambda\in Z(S)$, и положим
$m(\lambda, S):=\min\{m(\lambda, h)\,|\, h\in S\}$. Если $Z(S)=\emptyset$, то
считаем, что $W(S)=\emptyset$. Ясно, что всегда $S\subset S(W(S))$ (считаем, что
$S(\emptyset):=H(\Omega)$).
Для $\lambda\in\mathbb C\backslash Z(S)$ полагаем $m(\lambda, S):=0$.

\begin{remark} \label{RAS1} {\rm
Отметим следующее важное свойство "расщепляемости"\,
$D_{0,g_0}$: если $g_0=hv$, $f=hu$, $h, v, u\in H(\Omega)$, $v(0)=1$, то
$D_{0,g_0}^n(f)=h D_{0,v}^n(u)$ для любого $n\in\mathbb N$
(см. \cite[лемма 2]{ITOGI17}). Отсюда следует, что
для любого многочлена $P(z)=\sum\limits_{j=0}^N a_j z^j$ выполняется равенство
$P(D_{0, g_0})(f)=h P(D_{0, v})(u)$. Здесь, как обычно,
$P(D_{0, g_0})=\sum\limits_{j=0}^N a_j D_{0,g_0}^j$.
}
\end{remark}

\subsection{Экстремальные функции}

Всюду далее $g_0$ --- некоторая голоморфная в $\Omega$ функция, для которой $g_0(0)=1$.
Нам понадобятся функции из подпространств $H(\Omega)$, которые на заданном подмножестве
$Z(g_0)$ обращаются в $0$ c нужной кратностью, а на остальной части $Z(g_0)$
не равны $0$.

\begin{lemma}\label{extrem}
Пусть $G$ --- пространство Фреше, непрерывно вложенное в $H(\Omega)$;
$(\lambda_j)_j$ и $(\mu_k)_k$ --- дискретные последовательности попарно различных точек
$\Omega$ такие, что
$\lambda_j\ne\mu_k$ для любых $j, k$; $m_k\in\mathbb N$.
Предположим, что для любого $j$ существует функция $f_j\in G$ такая, что
$f_j(\lambda_j)\ne 0$, а для любого $k$ найдется функция $h_k\in G$, для которой $\mu_k$ -- нуль
кратности $m_k$, причем $\mu_k$ --- нуль кратности не меньше $m_k$ для любой функции $f\in G$.
Тогда существует функция $w\in G$, для которой $w(\lambda_j)\ne 0$ для любого $j$
и $\mu_k$ --- ее нуль кратности $m_k$ для каждого $k$.
\end{lemma}

\begin{proof} Пусть $\{p_n\,|\, n\in\mathbb N\}$ --- фундаментальная последовательность непрерывных
преднорм в $G$ (т.\,е. множества $\{x\in G\,|\, p_n(x)<\varepsilon\}$,
$\varepsilon > 0$, $n\in\mathbb N$, образуют базис окрестностей начала в $G$);
$p_n\le p_{n+1}$, $n\in\mathbb N$.
Положим $c_1:=\frac{1}{2(p_1(f_1)+1)}$ и $A_1:=c_1f_1(\lambda_1)$.
Если $c_j, A_j\in\mathbb C$ для $1\le j\le n$ для некоторого $n\in\mathbb N$
уже определены,
то определим $c_{n+1}\ne 0$, для которого
\begin{equation}
|c_{n+1}|\le \frac{1}{2^{n+1}(p_{n+1}(f_{n+1})+1)},
\end{equation}
\begin{equation}
|c_{n+1}|<\frac{1}{2^{n+1}}{\rm min}\left\{\frac{|A_j|}{|f_{n+1}(\lambda_j)|+1}\,|\,1\le j\le n \right\},
\end{equation}
и
\begin{equation}
A_{n+1}:=\sum\limits_{s=1}^{n+1}c_sf_s(\lambda_{n+1})\ne 0.
\end{equation}
Из (1.1) следует, что ряд $\sum\limits_{j} c_j f_j$ (возможно, конечный)
абсолютно сходится в $G$ к некоторой функции $f$. Из условий (1.2) и (1.3) вытекает, что $f(\lambda_j)\ne 0$
для любого $j$.


Аналогично построим функцию $h\in S$, исчезающую в точках $\mu_k$ с кратностями $m_k$.
Пусть $h_k(z)=(z-\mu_j)^{m_j}u_{k,j}(z)$, $u_{k,j}\in H(\Omega)$,
$u_{k,k}(\mu_k)\ne 0$. Полагаем $d_1:=\frac{1}{2(p_1(h_1)+1)}$,
$B_1:=d_1u_{1,1}(\mu_1)$.
Если числа $d_j$, $B_j$, $1\le j\le n$, для некоторого $n$ уже выбраны, то выбираем
$d_{n+1}$ такое, что
$$
|d_{n+1}|\le\frac{1}{2^{n+1}(p_{n+1}(h_{n+1})+1)};
$$
$$
|d_{n+1}|<\frac{1}{2^{n+1}}{\rm min}\left\{\frac{|B_s|}{|u_{n+1,s}(\mu_s)|+1}\,\left|\right.\,1\le s\le n\right\};
$$
$$
B_{n+1}:= \sum\limits_{s=1}^{n+1} d_s u_{s,n+1}(\mu_{n+1})\ne 0.
$$
Ряд $\sum\limits_k d_k h_k$ (возможно, конечный) сходится абсолютно в $G$ к некоторой функции $h\in G$.
При этом $\mu_k$ для любого $k$ --- нуль функции $h$ кратности $m_k$.

Пусть $f(z)=(z-\mu_k)^{m_k}v_k(z)$, $h(z)=(z-\mu_k)^{m_k}w_k(z)$,
$v_k, w_k\in H(\Omega)$, $w_k(\mu_k)\ne 0$.
Требуемая функция $w$ может быть найдена в виде $w=f+\beta h$, $\beta\in\mathbb C$.
При этом нужно выбрать $\beta\in\mathbb C$ такое, что для любых $j, k$
$$
f(\lambda_j)+ \beta h(\lambda_j)\ne 0;
$$
$$
v_k(\mu_k)+ \beta w_k(\mu_k)\ne 0,
$$
т.\,е. $\beta$  нужно выбирать вне не более чем счетного множества.
\end{proof}

\begin{definition} Пусть $S$ --- собственное замкнутое
подпространство $H(\Omega)$; одно из множеств $Z(S)$ и $Z(g_0)$ непусто.
Всякую функцию $v\in S$ такую,
что любое $\lambda\in Z(S)$ --- нуль кратности $m(\lambda, S)$
функции $v$ и $v$ в точках $Z(g_0)\backslash Z(S)$ в нуль не обращается, будем называть
$g_0$-экстремальной функцией (для) $S$.
\end{definition}

Лемма \ref{extrem} показывает, что всякое собственное замкнутое подпространство $H(\Omega)$,
если одно из множеств $Z(S)$ и $Z(W)$ непусто, содержит $g_0$-экстремальную функцию.

\begin{lemma}\label{LEMMAEXT} Пусть одно из множеств $Z(S)$ и $Z(g_0)$ непусто,
$v$ --- $g_0$-экстремальная функция для $S$. Тогда для любого
$f\in S$ существует $\alpha\in\mathbb C\backslash\{0\}$ такое,
что $f+\alpha v$ --- тоже $g_0$-экстремальная функция для $S$.
\end{lemma}

\begin{proof}
Пусть $W(S)=\{(\lambda_k, m_k) \}$. Для любого
$k$ функция $v$ представляется в виде
$v(z)=(z-\lambda_k)^{m_k}v_k(z)$, где $v_k\in H(\Omega)$,
$v_k(\lambda_k)\ne 0$, а функция $f$ имеет вид $f(z)=(z-\lambda_k)^{m_k}f_k(z)$,
$f_k\in H(\Omega)$. Так как $Z(g_0)$ не более чем счетно, то найдется
$\alpha\in\mathbb C\backslash\{0\}$, для которого
$f(\lambda)+\alpha v(\lambda)\ne 0$ для любого $\lambda\in Z(g_0)\backslash Z(S)$
и $f_k(\lambda_k) + \alpha v_k(\lambda_k)\ne 0$ для любого $k$.
(Если $W(S)$ пусто, то последние ограничения на $\alpha$ отсутствуют.)
\end{proof}

\medskip
Далее для кратных многообразий $W=\{(\lambda, n_\lambda)\}$ и $V=\{(\nu, m_\nu)\}$ в $\Omega$
будем писать $W\prec V$, если $Z(W)\subset Z(V)$ и
$n_\lambda\le m_\lambda$ для любого $\lambda\in Z(W)$.

Ниже символ $\mathcal D(g_0)$ обозначает множество всех многочленов $p$
таких, что $p(0)=1$, функция $g_0/p$ голоморфна в $\Omega$ и $p$ не имеет корней в $\mathbb C\backslash\Omega$.
Если $g_0\equiv 1$, то $\mathcal D(g_0)=\{g_0\}$.

\medskip
\begin{lemma} \label{MNOGOOBRASIE}
Предположим, что функция $g_0$ имеет нули в $\Omega$.
\begin{itemize}
\item[(i)] Если $S$ --- собственное замкнутое
$D_{0,g_0}$-инвариантное подпространство $H(\Omega)$ такое, что $S\backslash (g_0\mathbb C(z))\ne\emptyset$,
то множество $Z(S)$ непусто.

\item[(ii)] Пусть $S$ --- собственное замкнутое
$D_{0,g_0}$-инвариантное подпространство $H(\Omega)$ и множество $Z(S)$ непусто.
Тогда $W(S)\prec W(g_0)$.

\item[(iii)] Если $Z(g_0)$ бесконечно, то для любого собственного
замкнутого $D_{0,g_0}$-инвариан\-тного подпространства $S$
пространства $H(\Omega)$ множество $Z(S)$ непусто.

\item[(iv)] Пусть $S$ --- собственное замкнутое
$D_{0,g_0}$-инвариантное подпространство $H(\Omega)$, содержащееся в $g_0\mathbb C(z)$.
Тогда множество всех $\lambda\in Z(g_0)$ таких, что $m(\lambda, S)<m(\lambda, g_0)$,
конечно или пусто.
\end{itemize}
\end{lemma}

\begin{proof}
(i): Предположим, что $Z(S)=\emptyset$.
Тогда для любого $\lambda\in Z(g_0)$ найдется функция $f_\lambda\in S$ такая, что
$f_\lambda(\lambda)\ne 0$. По лемме \ref{extrem} существует $g_0$-экстремальная функция $w\in S$
для $S$, для которой
$w(\lambda)\ne 0$ для любого $\lambda\in Z(g_0)$.
Так как $w$ не является циклическим вектором $D_{0, g_0}$ в $H(\Omega)$, то
$w\in g_0\mathbb C(z)$ по теореме \ref{cycl}.
Зафиксируем $f\in S\backslash(g_0\mathbb C(z))$.
По лемме \ref{LEMMAEXT} найдется ненулевое $\alpha\in\mathbb C$
такое, что $w_0:= f + \alpha w\in S$ --- тоже $g_0$-экстремальная функция $S$,
т.\,е. $w_0(\lambda)\ne 0$ для любого $\lambda\in Z(g_0)$.
Кроме того, $w_0\notin g_0\mathbb C(z)$. Значит, по теореме \ref{cycl},
$w_0$ --- циклический вектор $D_{0,g_0}$ в $H(\Omega)$. Получили
противоречие.

(ii): Предположим, что существует $\lambda\in Z(S)\backslash Z(g_0)$.
Пусть $\lambda\ne 0$. Из равенства
$0=D_{0,g_0}(f)(\lambda)=\frac{f(\lambda)-g_0(\lambda)f(0)}{\lambda}=-\frac{f(0) g_0(\lambda)}{\lambda}$
следует, что $f(0)=0$ для любой функции $f\in S$.
Поэтому $D_{0, g_0}^n(f)(0)=0$ для любых функции $f\in S$
и целого $n\ge 0$.
Согласно \cite[лемма 2, доказательство леммы 3]{LINCHUK} (см. также \cite[лемма 7, замечание 10]{AA})
последовательность функционалов $\varphi_n: f\mapsto D_{0, g_0}^n(f)(0)$, $n\ge 0$,
полна в сопряженном $H(\Omega)'$, наделенном слабой топологией $\sigma(H(\Omega)', H(\Omega))$.
Значит, всякая функция $f\in S$ является тождественным нулем. Противоречие.
Пусть $\lambda=0$. Тогда $f(t)/t^n\in S$, т.\,е. $f^{(n)}(0)=0$ для всех $f\in S$ и
целых $n\ge 0$. Поэтому $S=\{0\}$ и снова получаем противоречие.

Возьмем $\lambda\in Z(S)$. Предположим, что $m(\lambda, S)>m(\lambda, g_0)$.
Так как $\lambda\ne 0$, то
$D_{0,g_0}(f)(\lambda)=\frac{f(\lambda)-g_0(\lambda)f(0)}{\lambda}$. Поскольку
$\lambda$ --- нуль кратности не меньше $m(\lambda, S)$ функции $D_{0,g_0}(f)$ для всех $f\in S$, то
$f(0)=0$, если $f\in S$. Поэтому $D_{0, g_0}^n(f)(0)=0$ для любой функции $f\in S$
и любого целого $n\ge 0$. Противоречие.

(iii): Пусть $Z(g_0)=\{\lambda_k\,|\,k\in\mathbb N\}$. Предположим, что $Z(S)$ пусто.
Тогда для любого $k\in\mathbb N$ найдется функция $f_k\in S$ такая, что
$f_k(\lambda_k)\ne 0$. По лемме \ref{extrem}  существует $g_0$-экстремальная функция $w\in S$
для $S$, для которой
$w(\lambda_k)\ne 0$ для любого $k\in\mathbb N$. Отсюда следует, что $w\notin \mathcal Z_0\cup(g_0\mathbb C[z])$.
Теорема \ref{cycl} влечет, что $w$ --- циклический вектор $D_{0,g_0}$
в $H(\Omega)$. Противоречие.

(iv): Предположим противное. Тогда все нули $\lambda$ функции $g_0$, для которых
$m(\lambda, S)<m(\lambda, g_0)$, образуют последовательность
$\{\lambda_j\,|\, j\in\mathbb N\}$ различных точек.
Положим
$k_j:= m(\lambda_j, g_0)-m(\lambda_j, S)$, $j\in\mathbb N$. Существует голоморфная в $\Omega$
функция $u$ с нулевым многообразием $\{(\lambda_j, k_j)\,|\, j\in\mathbb N\}$
и такая, что $u(0)=1$. Пусть $v$ --- $g_0$-экстремальная функция $S$.
Тогда функция $h_0:=\frac{u v}{g_0}$ голоморфна в $\Omega$. При этом
$D_{0, g_0}^n(v)=\frac{g_0}{u} D_{0,u}^n({h_0})$ для любого целого $n\ge 0$.
Функции $u$ и $h_0$ не имеют общих нулей. Кроме того, не существует рациональной функции
$R$ такой, что $h_0=R u$, т.\,е. $\frac{v}{g_0}=R$.
Действительно, каждая точка $\lambda_j$, $j\in\mathbb N$,
является полюсом $\frac{v}{g_0}$ в $\Omega$. Из теоремы \ref{cycl} следует, что
$ \frac{g_0}{u} H(\Omega)\subset S$.
Получено противоречие с вложением $S\subset g_0\mathbb C(z)$.
\end{proof}

Из леммы \ref{MNOGOOBRASIE}\,(iv) вытекает

\begin{corollary} \label{SLEDMNOGOOBRASIE}
Для любого собственного замкнутого $D_{0,g_0}$-инвариантного подпространства $S$
пространства $H(\Omega)$, содержащегося в $g_0\mathbb C(z)$,
существует единственный многочлен $p_S\in\mathcal D(g_0)$ такой, что
$W(S)=W(g_0/p_S)$.
\end{corollary}

\begin{remark} {\rm
Если множество нулей функции $g_0$ конечно, то существуют
собственные замкнутые $D_{0,g_0}$-инвариантные подпространства $H(\Omega)$
с пустым множеством общих нулей. Пусть $g_0=q g_1$, где
$q$ --- многочлен степени не меньше $1$, $q(0)=1$, функция $g_1\in H(\Omega)$ не
имеет нулей в $\Omega$ и $g_1(0)=1$. Как будет показано далее (см. теоремы \ref{WITHZERO},
\ref{PLANE}),
для любого $n\ge {\rm deg}(q) - 1$ множество
$S= g_1\mathbb C[z]_n$ является собственным замкнутым
$D_{0,g_0}$-инвариантным подпространством $H(\Omega)$. При этом $Z(S)=\emptyset$.
}
\end{remark}

\medskip
\subsection{Оператор, сопряженный к $D_0$}

Будем писать $D_0$ вместо $D_{0,g_0}$, если функция $g_0$ тождественно равна $1$.
Заметим, что $D_{0,g_0}= D_0-A_0$, где $A_0(f)=f(0)g_1=\delta_0(f)g_1$
и $g_1(t)=\frac{g_0(t)-1}{t}$, если $t\ne 0$. При этом $\delta_0(f)=f(0)$.
Именно в таком виде оператор
$D_{0,g_0}$ был исследован Ю.С. Линчуком \cite{LINCHUK}.
Если рассматривать естественную двойственность между $H(\Omega)$ и
$H(\Omega)'$, сопряженный оператор $D_{0,g_0}'$  имеет следующий вид:
$D_{0,g_0}'=D_0'- A_0'$, где $A_0'(\varphi)=\varphi(g_1)\delta_0$,
$\varphi\in H(\Omega)'$.

Пусть $H_0:=H(\{0\})$ --- пространство ростков всех функций, голоморфных в начале.
Оно  наделяется естественной топологией индуктивного предела (см., например,
\cite[\S~1]{KHAVIN}).

По теореме Силва-Кете-Гротендика о двойственности пространств аналитических функций
\cite{KOETHE} (см. также \cite[\S\,2]{KHAVIN}) преобразование Коши
$$
\varphi\mapsto \varphi(q_{\lambda,1}), \, \lambda\ne 0, \, \varphi\in H_0',
$$
является топологическим изоморфизмом сильного сопряженного к $H_0$ на
пространство Фреше $H_0(\overline{\mathbb C}\backslash\{0\})$ всех
функций, голоморфных в $\overline{\mathbb C}\backslash\{0\}$ и равных $0$ в $\infty$.
Возникающая при таком изоморфизме билинейная форма
$$
\langle f, h\rangle:=-\frac{1}{2\pi i}\int\limits_{|t|=\varepsilon}f(t)h(t)dt,
\, f\in H_0,\, h\in H_0(\overline{\mathbb C}\backslash\{0\}),
$$
задает двойственность между $H_0$ и $H_0(\overline{\mathbb C}\backslash\{0\})$
($\varepsilon>0$ выбирается так, чтобы $f$ была голоморфна в некоторой области, содержащей круг
$|t|\le\varepsilon$, а окружность $|t|=\varepsilon$ обходится против часовой стрелки).
Отметим, что
\begin{equation} \label{DUALITY}
\langle q_{\lambda,k}, h\rangle =\frac{h^{(k-1)}(\lambda)}{(k-1)!},\,\, \lambda\ne 0,\, k\in\mathbb N, \,
h\in H_0(\overline{\mathbb C}\backslash\{0\}).
\end{equation}

Для линейного непрерывного оператора $A: H_0\to H_0$ символом $A'$ обозначим сопряженный к $A$
(относительно дуальной системы $(H_0, H_0(\mathbb C\backslash\{0\}))$)
оператор из $H_0(\mathbb C\backslash\{0\})$ в $H_0(\mathbb C\backslash\{0\})$.

Из равенства $\langle D_0(f), h\rangle=
\langle f, D_0'(h)\rangle$, $f\in H_0$, $h\in H_0(\overline{\mathbb C}\backslash\{0\})$,
для функций $f(t)=\frac{1}{t-\lambda}$ следует, что
$D_0'(h)(\lambda)=\frac{1}{\lambda}h(\lambda)$ для $\lambda\ne 0$. Данная двойственность является удобным средством изучения
оператора $D_0$ (см. далее замечание \ref{drobi}).

\medskip

Обозначим через $\mathbb C[D_{0,g_0}]$ множество всех операторов
$P(D_{0,g_0})$, $P\in\mathbb C[z]$, символом $I$ --- тождественный оператор.
Для $\lambda\in\mathbb C\backslash\{0\}$, $k\in\mathbb N$ положим
$$
Q(\lambda, k):={\rm span}\left\{ q_{\lambda, j}\,|\, 1\le j\le k\right\}.
$$

\medskip
\begin{remark}\label{drobi}{\rm
Отметим простые факты о действии оператора $D_0$ в $H_0$, в частности, о его действии на простейшие дроби.

\noindent
1) ${\rm Ker}\,D_0^n=\mathbb C[z]_{n-1}$ для любого $n\in\mathbb N$.

\noindent
2) Если $\lambda\ne 0$, то $1/\lambda$ --- собственное значение $D_0: H_0\to H_0$, а
 собственными векторами оператора $D_0$, соответствующими $1/\lambda$,
 являются функции $\frac{C}{t-\lambda}$, $C\in\mathbb C\backslash\{0\}$,
и только они.


\medskip
Следующие свойства связаны с "просеиванием"\, рациональных дробей с помощью
операторов из $\mathbb C[D_0]$.

\medskip

\noindent 3) Для любых $\lambda, \mu\in\mathbb C\backslash\{0\}$,
$k\in\mathbb N$ существуют числа $\alpha_j$, $1\le j\le k$, такие,
что
$$
\left(D_0 -\frac{1}{\mu}I\right)(q_{\lambda, k})=\sum\limits_{j=1}^k\alpha_j q_{\lambda, j}.
$$
При этом $\alpha_k\ne 0$ в случае $\mu\ne\lambda$.

\noindent
4) $\left(D_0 -\frac{1}{\mu}I\right)(Q(\lambda, k))\subset Q(\lambda, k)$
для любых $\lambda, \mu\in\mathbb C\backslash\{0\}$, $k\in\mathbb N$.

Если при этом $\lambda\ne\mu$, $f\in Q(\lambda, k)$, $f\ne 0$, то
$\left(D_0 -\frac{1}{\mu}I\right)(f)\ne 0$.

\noindent 5) $\left(D_0-\frac{1}{\lambda}I\right)^n(q_{\lambda,
n})=0$ для любых $\lambda\ne 0$, $n\in\mathbb N$.

\noindent
6) Для любых $\lambda\ne 0$, $k, m\in\mathbb N$ таких, что $1\le m\le k$, любых $b_r\in\mathbb C$, $1\le r\le k$,
$b_k\ne 0$, для функции $R(t)=\sum\limits_{r=1}^k b_r q_{\lambda, r}$
существует оператор $A=\sum\limits_{j=0}^k a_j D_0^j\in\mathbb C[D_0]$ такой, что $A(R)=q_{\lambda,m}$.
}
\end{remark}

\begin{proof}
3): Для любого $h\in H_0(\overline{\mathbb C}\backslash\{0\})$, любого $\varepsilon\in(0,|\lambda|)$
$$
\left\langle \left(D_0-\frac{1}{\mu}I\right)\left(q_{\lambda, k}\right), h\right\rangle =
\left\langle q_{\lambda, k}(t), \left(\frac{1}{t} -\frac{1}{\mu}\right)h(t)\right\rangle=
$$
$$
\left\langle q_{\lambda,k}(t),\frac{h(t)}{t}\right\rangle - \frac{1}{\mu}\left\langle q_{\lambda,k}, h\right\rangle=
\frac{1}{(k-1)!}h_1^{(k-1)}(\lambda) - \frac{1}{\mu}\left\langle q_{\lambda,k}, h\right\rangle,
$$
где $h_1(t)=h(t)/t$.
Поскольку
$$
h_1^{(k-1)}(\lambda)=\sum\limits_{j=0}^{k-1} C_{k-1}^j h^{(j)}(\lambda)\frac{(-1)^{k-1-j}(k-1-j)!}{\lambda^{k-j}},
$$
то, с учетом (\ref{DUALITY}), получим:
$$
\left\langle \left(D_0-\frac{1}{\mu}I\right)\left(q_{\lambda, k}\right), h\right\rangle =
$$
$$
\frac{1}{(k-1)!}\sum\limits_{j=0}^{k-1} C_{k-1}^j\frac{(-1)^{k-1-j}(k-1-j)!}{\lambda^{k-j}}h^{(j)}(\lambda)-
\frac{1}{\mu}\left\langle q_{\lambda,k}, h\right\rangle=
$$
$$
\frac{1}{(k-1)!}\sum\limits_{j=0}^{k-1} C_{k-1}^j\frac{(-1)^{k-1-j}(k-1-j)! j!}{\lambda^{k-j}}\left\langle q_{\lambda,j-1}, h\right\rangle
-\frac{1}{\mu}\left\langle q_{\lambda,k}, h\right\rangle.
$$
Отсюда следует, что
$\left(D_0-\frac{1}{\mu}I\right)\left(q_{\lambda,k}\right)=
\sum\limits_{j=1}^k\alpha_j q_{\lambda,j}$, где
$\alpha_k=\frac{1}{\lambda}-\frac{1}{\mu}\ne 0$, если $\lambda\ne\mu$,
и
$\alpha_j=\frac{(-1)^{k-j}}{\lambda^{k-j+1}}$,
\,$1\le j\le k-1$ (для $k\ge 2$).

\medskip
Утверждение 4) следует из 3).

\medskip
Равенство в 5) следует из того, что для любого $h\in H_0(\overline{\mathbb C}\backslash\{0\})$
$$
\left\langle \left(D_0-\frac{1}{\lambda} I\right)^{n}(q_{\lambda,n}), h\right\rangle=
\left\langle q_{\lambda,n},\left(\left(D_0-\frac{1}{\lambda} I\right)'\right)^n (h)\right\rangle=
$$
$$
\left\langle q_{\lambda,n}(t),\left(\frac{1}{t}-\frac{1}{\lambda}\right)^n h(t)\right\rangle=
\frac{(-1)^{n+1}}{2\pi i \lambda^n}\int\limits_{|t|=\varepsilon}\frac{h(t)}{t^n}dt=0.
$$

\noindent
6): Равенство $A(R)=q_{\lambda,m}$ равносильно тому, что
$\langle A(R), h\rangle=\frac{h^{(m-1)}(\lambda)}{(m-1)!}$ для каждой функции $h\in H_0(\overline{\mathbb C}\backslash\{0\})$,
т.\,е. тому, что для любой функции $h\in H_0(\overline{\mathbb C}\backslash\{0\})$
\begin{equation}
\sum\limits_{r=1}^k b_r\langle A(q_{\lambda, r}), h\rangle=\frac{h^{(m-1)}(\lambda)}{(m-1)!}.
\label{SVOISTVO8}
\end{equation}
Равенство (\ref{SVOISTVO8}) для функции $w(t)=\sum\limits_{j=0}^k\frac{a_j}{t^j}$
можно переписать так:
$$
\sum\limits_{r=1}^k \frac{b_r}{(r-1)!}(w h)^{(r-1)}(\lambda)= \frac{h^{(m-1)}(\lambda)}{(m-1)!},
$$
а значит, в виде
$$
\sum\limits_{s=0}^{k-1} h^{(s)}(\lambda)\sum\limits_{r=0}^{k-s-1}\frac{b_{r+s+1}}{(r+s)!}C_{r+s}^s
w^{(r)}(\lambda)=\frac{h^{(m-1)}(\lambda)}{(m-1)!}.
$$
Приравняем множитель (слева) при $h^{(m-1)}(\lambda)$ к
$1/(m-1)!$, при $h^{(s)}(\lambda)$, $0\le s\le~{k-1}$, $s\ne m-1$
--- к $0$. Получим систему линейных уравнений с $k$ неизвестными
$w^{(r)}(\lambda)$, $0\le r\le k-1$, с верхнетреугольной матрицей.
Диагональные элементы $\frac{b_k}{(k-1)!}C_{k-1}^s$, $0\le s\le
k-1$, этой матрицы отличны от $0$. Следовательно, эта система
имеет ненулевое решение, по которому определим коэффициенты $a_j$,
$0\le j\le k$.
\end{proof}

\subsection{Вспомогательные результаты для инвариантных подпрост\-ранств,
содержащихся в $g_0\mathbb C(z)$}

\medskip
\begin{lemma} \label{STEPEN}
Пусть $\Omega\ne\mathbb C$,
$f=\frac{g_0}{p}r + g_0 h$, где $p\in\mathcal D(g_0)$, $r$ --- ненулевой многочлен и
$h\in\mathbb C^-_\Omega(z)$. Тогда
существует оператор $A\in\mathbb C[D_{0,g_0}]$ такой, что $A(f)=\frac{g_0}{p}\widetilde r$,
где $\widetilde r$ --- многочлен степени $\max\{{\rm deg}(r); {\rm deg}(p)-1\}$.
\end{lemma}

\begin{proof}
Разложим $h$ на простейшие дроби: $h=\sum\limits_{j=1}^m\sum\limits_{k=1}^{k_j} a_{j,k} q_{\lambda_j, k}$,
где $\lambda_j$ --- различные точки в $\mathbb C\backslash\Omega$, $k_j\in\mathbb N$, $1\le j\le m$.
Вследствие замечания \ref{drobi},\,5) для оператора
$$
B:=\left(D_{0, g_0}-\frac{1}{\lambda_1} I\right)^{k_1}\left(D_{0, g_0}-\frac{1}{\lambda_2} I\right)^{k_2}\cdot\cdot\cdot
\left(D_{0, g_0}-\frac{1}{\lambda_m} I\right)^{k_m}
$$
выполняется равенство $B(g_0 h)=0$. Поэтому $B(f)= B\left(\frac{g_0}{p}r\right)$.

Отметим, что для $\mu, t\in\mathbb C\backslash\{0\}$
$$
\left(D_{0, g_0}-\mu I\right)\left(\frac{g_0}{p}r\right)(t)=
$$
$$
\frac{g_0(t)}{p(t)}
\left(D_{0, p}-\mu I\right)(r)(t)=
\frac{g_0(t)}{p(t)}\left(\frac{r(t)-p(t)r(0)}{t} -\mu r(t)\right)=
$$
\begin{equation}
\frac{g_0(t)}{p(t)}\frac{r(t)-\mu t r(t)- p(t)r(0)}{t}=\frac{g_0(t)}{p(t)}r_1(t),
\label{17.04}
\end{equation}
где $r_1(t):=\frac{r(t)-\mu t r(t)- p(t)r(0)}{t}$.
Положим $n:={\rm deg}(r)$. Если $n\ge {\rm deg}(p)$, то ${\rm deg}(r_1)=n$.
Поэтому $B(f)=\frac{g_0}{p}\widetilde r$, где $\widetilde r$ --- многочлен степени $n$.
В этом случае $A=B$.

Пусть теперь $n\le {\rm deg}(p) - 1$. Вследствие (\ref{17.04}) $B(f)=\frac{g_0}{p}r_2$,
где $r_2$  --- многочлен степени не большей, чем ${\rm deg}(p) - 1$.
Если ${\rm deg}(r_2)={\rm deg}(p) - 1$, то лемма доказана (и $A=B$).

Пусть ${\rm deg}(r_2)< {\rm deg}(p) - 1$.
Вначале покажем, что многочлен $r_2$  ненулевой. Предположим противное. Тогда
существуют $j\in\{1,...,m\}$ и ненулевой многочлен $q$ такие, что
функция $\left(D_{0,g_0} - \frac{1}{\lambda_j}I \right)\left(\frac{g_0}{p} q\right)$
является тождественным нулем. Вследствие равенства (\ref{17.04}) (для $q$ и $1/\lambda_j$
вместо $r$ и $\mu$ соответственно) для любого $t\in\Omega$
$$
\left(1-\frac{t}{\lambda_j}\right) q(t)= q(0) p(t).
$$
Поэтому $q(0)\ne 0$, а значит, $p(\lambda_j)=0$. Получено противоречие с тем, что
$p$ не имеет нулей в $\mathbb C\backslash\Omega$.

Подействуем на $B(f)$ оператором $D_{0,g_0}$:
$$
D_{0,g_0}(B(f))(t)=\frac{g_0(t)}{p(t)}\frac{r_2(t)-p(t) r_2(0)}{t}.
$$
Положим $r_3(t):=\frac{r_2(t)-p(t) r_2(0)}{t}$.
Если $r_2(0)\ne 0$, то ${\rm deg}(r_3)={\rm deg}(p) - 1$. В этом случае
$A= D_{0,g_0} B$, $\widetilde r=r_3$.
Пусть $r_2(t)=t^s r_0(t)$, где $s\in\mathbb N$ и $r_0$ ---
многочлен такой, что $r_0(0)\ne 0$. Тогда
$D_{0,g_0}^s(B(f))=\frac{g_0}{p}r_0$, $t\ne 0$, и
$$
D_{0,g_0}^{s+1}(B(f))(t)=\frac{g_0(t)}{p(t)}\frac{r_0(t) - p(t)r_0(0)}{t}.
$$
Степень многочлена $\widetilde r(t)=\frac{r_0(t) - p(t)r_0(0)}{t}$
равна ${\rm deg}(p) - 1$. В этой ситуации $A=D_{0, g_0}^{s+1} B$.
\end{proof}

\medskip
\begin{lemma} \label{STRUCTURA}
Пусть $S$ --- собственное замкнутое $D_{0,g_0}$-инвариантное подпространство
$H(\Omega)$, содержащееся в $g_0\mathbb C(z)$; $p_S\in\mathcal D(g_0)$ --- многочлен такой,
что $W(S)=W(g_0/p_S)$ (см. следствие \ref{SLEDMNOGOOBRASIE}).

\begin{itemize}
\item[(i)] Для любой ненулевой функции $f\in S$ найдутся многочлены $r=r(f)$,
$u=u(f)$, $v=v(f)$ такие, что ${\rm deg}(u)<{\rm deg}(v)$, $v$ унитарный (т.\,е.
старший коэффициент $v$ равен 1),
$v$ не имеет корней в $\Omega$, многочлены $u$ и $v$ не имеют общих корней и
\begin{equation} \label{PREDSTAVLENIE}
f=\frac{g_0}{p_S}r + g_0\frac{u}{v}.
\end{equation}
\item[(ii)] В представлении (\ref{PREDSTAVLENIE}) многочлены $r$, $u$, $v$
определены однозначно.
\end{itemize}

\end{lemma}

\begin{proof} (i):
Пусть $p:=p_S$.
Возьмем $f\in S$. Тогда для некоторых многочленов
$r_1, u_1, v_1$ таких, что ${\rm deg}(u_1)<{\rm deg}(v_1)$ и $u_1$ и $v_1$ не имеют
общих корней, выполняется равенство
$f=g_0\left(r_1+\frac{u_1}{v_1}\right)$.
Факторизуем $v_1$ в виде $v_1=q v$, где  $q\in\mathcal D(g_0)$,
а унитарный многочлен $v$ не имеет корней в $\Omega$. Тогда
\begin{equation}
\label{raslozh}
f=g_0\left(r_1+\frac{u_1}{q v}\right)=\frac{g_0}{p}p r_1 + g_0\frac{u_1}{q v}.
\end{equation}
Правильную дробь $\frac{u_1}{qv}$ представим в виде $\frac{u_1}{qv}=\frac{q_1}{q}+\frac{u}{v}$,
где $q_1$, $u$ --- многочлены, для которых ${\rm deg}(q_1)<{\rm deg}(q)$
и ${\rm deg}(u)<{\rm deg}(v)$. Получим, что
$$
f=\frac{g_0}{p}p r_1 + g_0\left(\frac{q_1}{q}+\frac{u}{v}\right)=
\frac{g_0}{p}\left(p r_1 +\frac{p q_1}{q}\right) +g_0\frac{u}{v}.
$$
При этом многочлен $p$ делится на $q$.
Действительно, функция $\frac{p f}{g_0}$
голоморфна в $\Omega$. Из (\ref{raslozh}) следует, что в $\Omega$
голоморфна функция $pr_1 + \frac{p u_1}{q v}$, а значит, и $\frac{ p u_1}{q v}$.
Поскольку $u_1$ и $q$ не имеют общих корней, то $p$ делится на $q$.
Поэтому $r:=p r_1 +\frac{p q_1}{q}$ ---
многочлен.

(ii): Докажем единственность представления (\ref{PREDSTAVLENIE}).
Пусть для многочленов $r_1$, $u_1$, $v_1$ и $r_2$, $u_2$, $v_2$, как в (i),
в $\Omega$ выполняется равенство
$$
\frac{g_0}{p}r_1 + g_0\frac{u_1}{v_1}=\frac{g_0}{p}r_2 + g_0\frac{u_2}{v_2}.
$$
Тогда в $\Omega$
$$
\frac{r_1 v_1+p u_1}{v_1}=\frac{r_2 v_2+p u_2}{v_2},
$$
причем числитель и знаменатель в обеих дробях не имеют общих корней.
Это влечет, что $v_1=v_2=:\widetilde v$.
Поэтому $(r_1-r_2)\widetilde v=p(u_2-u_1)$. Так как многочлены $p$ и $\widetilde v$
взаимно простые, то $u_2-u_1$ делится на $\widetilde v$, а значит, $u_1=u_2$. Тогда
и $r_1=r_2$.
\end{proof}

\medskip
Пусть $S$ --- собственное замкнутое $D_{0,g_0}$-инвариантное подпространство
$H(\Omega)$, содержащееся в $g_0\mathbb C(z)$.
Символом $\mathcal P(S)$ обозначим множество всех корней многочленов $v(f)$, $f\in S\backslash\{0\}$,
как в (\ref{PREDSTAVLENIE}),
т.\,е.
$$
\mathcal P(S):=\{\lambda\in\mathbb C\backslash\Omega\,|\,\exists f\in S\backslash\{0\}:\, v(f)(\lambda)=0\}.
$$
Далее ${\bf 1}$ обозначает функцию, тождественно равную $1$. Считаем, что $r(f)=u(f)=0$ и $v(f)={\bf 1}$,
если $f=0$.

Пусть $\Upsilon$ --- конечное кратное многообразие в $\mathbb C\backslash\Omega$, т.\,е.
$\Upsilon=\{(\lambda, n_\lambda)\,|\,\lambda\in\Lambda\}$, где $n_\lambda\in\mathbb N$,
$\Lambda$ --- конечное подмножество $\mathbb C\backslash\Omega$.
Положим
$$
\mathbb C^-_{\Upsilon}(z):={\rm span}\left\{ q_{\lambda,k}\,|\,\lambda\in\Lambda,\, 1\le k\le n_\lambda\right\}.
$$
Если $\Upsilon$ пусто, т.\,е. $\Lambda$ --- пустое множество, то для удобства
считаем, что $\mathbb C_\Upsilon^-(z)=\{0\}$.

Полагаем $\mathbb C[z]_{-\infty}:=\{0\}$.

\medskip
\begin{remark} {\rm
Ниже (теоремы 2--4) будет показано, что семейство пространств
$$
S(p, n, \Upsilon):=\frac{g_0}{p}\mathbb C[z]_n+g_0\mathbb
C_\Upsilon^-(z),
$$
где $n\in\mathbb N\bigcup\{-\infty, 0\}$,
$p\in\mathcal D(g_0)$, $\Upsilon$ --- конечное или пустое кратное
многообразие в $\mathbb C\backslash\Omega$, содержит все
собственные замкнутые $D_{0,g_0}$-инвариантные подпространства
$H(\Omega)$, вложенные в $g_0\mathbb C(z)$.

Отметим, что $S(n, p, \Upsilon)$ однозначно определяется тройкой $(n,p,\Upsilon)$,
т.\,е. пространства $S(p_1, n_1, \Upsilon_1)$ и $S(p_2, n_2, \Upsilon_2)$
совпадают тогда и только тогда, когда $n_1=n_2$, $p_1=p_2$, $\Upsilon_1=\Upsilon_2$.
}
\end{remark}

\medskip
\begin{lemma}\label{POLDROBI}
Пусть собственное замкнутое $D_{0,g_0}$-инвариантное подпространство $S$
пространства $H(\Omega)$ содержится в $g_0\mathbb C(z)$; $p_S\in\mathcal D(g_0)$ ---
многочлен, для которого $W(S)=W(g_0/p_S)$; для $f\in S\backslash\{0\}$ многочлены
$r(f)$, $u(f)$, $v(f)$ такие, как в равенстве (\ref{PREDSTAVLENIE}).
Тогда
\begin{itemize}
\item[(i)] $n(S):=\sup\limits_{f\in S}{\rm deg} (r(f))<+\infty$ и $\frac{g_0}{p_S}\mathbb C[z]_{n(S)}\subset S$.

При этом $n(S)\ge {\rm deg}(p)-1$, если $n(S)\ne-\infty$.
\item[(ii)] $\mathcal P(S)$ конечно или пусто.
\item[(iii)] $n_\lambda(S):=\sup\limits_{f\in S} m(\lambda, v(f))<+\infty$
для любого $\lambda\in\mathcal P(S)$.
\end{itemize}
\end{lemma}

\begin{proof}
(i): Положим $p:=p_S$.
Предположим, что $S$ содержит функцию $f$,
для которой $r:=r(f)$ --- ненулевой
многочлен.
В силу леммы \ref{STEPEN} подпространство $S$ содержит и функцию $\widetilde f=\frac{g_0}{p}\widetilde r$,
где многочлен $\widetilde r$ имеет степень $m\ge {\rm deg}(p)-1$.
Поскольку $W(g_0/p)\prec W(\widetilde f)$, то многочлен $\widetilde r$ не имеет общих корней с $p$,
а следовательно, взаимно прост с $p$.

Пусть $k:={\rm deg}(p)\ge 1$.
По \cite[лемма 6]{ITOGI17} система $\left\{D_{0,p}^j(\widetilde r)\,|\, 1\le j\le k\right\}$ линейно независима
в $H(\Omega)$.
Если $m= k-1$, то
$\left\{D_{0,p}^j(\widetilde r)\,|\, 1\le j\le k\right\}$ --- базис в $\mathbb C[z]_m$.
Если $m=k$, то $\{D_{0,p}^j(\widetilde r)\,|\, 0\le j\le k\}$ --- базис в $\mathbb C[z]_m$.

Пусть теперь $m>k$. В этом случае ${\rm deg}\left(D_{0,p}^j(\widetilde r)\right)=m-j$, $0\le j\le m-k$. Поскольку по
\cite[лемма 4]{ITOGI17} многочлены $D_{0,p}^{m-k}(\widetilde r)$ и $p$ взаимно простые, то система
$\{ D_{0, p}^j(\widetilde r)=D_{0,p}^{j-m+k}(D_{0,p}^{m-k}(\widetilde r))\,|\, m-k+1\le j\le m\}$,
которая содержится в $\mathbb C[z]_{k-1}$, линейно независима в $H(\Omega)$.
Отсюда следует, что $\{D_{0,p}^j(\widetilde r)\,|\, 0\le j\le m\}$ --- базис в $\mathbb C[z]_m$
и в случае $m>k$. Поскольку для любого $j\in\mathbb N$ выполняется равенство
$D_{0,g_0}^j(\widetilde f)=\frac{g_0}{p}D_{0,p}^j(\widetilde r)$, то
$\frac{g_0}{p}\mathbb C[z]_m\subset S$.
Если ${\rm deg}(p)=0$, т.\,е. $p\equiv 1$, то $D_{0,g_0}^j\left(\frac{g_0}{p}\widetilde r\right)=
g_0 D_0^j(\widetilde r)$, $j\in\mathbb N$. Поскольку ${\rm deg}(D_0^j(\widetilde r)=m-j$,
$0\le j\le m$, то также $\frac{g_0}{p}\mathbb C[z]_m\subset S$.

Предположим, что $\sup\limits_{f\in S}{\rm deg}(r(f))=+\infty$.  Тогда существуют функции
$f_n\in S$, $n\in\mathbb N$, такие, что $m_n:={\rm deg}(r(f_n))\to +\infty$. Без
ограничения общности $m_n\ge k-1$ для любого $n\in\mathbb N$.
По доказанному выше $\frac{g_0}{p}\mathbb C[z]_{m_n}\subset S$ для всех $n\in\mathbb N$, а
значит, $\frac{g_0}{p}\mathbb C[z]\subset S$. Так как область $\Omega$ односвязная, то по теореме Рунге
$\mathbb C[z]$ плотно в $H(\Omega)$. Поэтому $\frac{g_0}{p}H(\Omega)\subset S$
вследствие замкнутости $S$.
Это противоречит вложению $S\subset g_0\mathbb C(z)$. Итак, $n(S)=\sup\limits_{f\in S}{\rm deg}(r(f))<+\infty$.

Вложение $\frac{g_0}{p}\mathbb C[z]_{n(S)}\subset S$ в случае $n(S)=-\infty$ очевидно.
Если $n(S)\ge 0$, то, как доказано выше,
$n(S)\ge {\rm deg}(p)-1$ и $\frac{g_0}{p}\mathbb C[z]_{n(S)}\subset S$.

(ii): Зафиксируем $\lambda\in\mathcal P(S)$ и возьмем функцию $f\in S$, для которой
$v(f)(\lambda)=0$ (многочлены $r(f)$, $u(f)$, $v(f)$ такие, как в (\ref{PREDSTAVLENIE})).
Вследствие (i) $\frac{g_0}{p}r(f)\in S$, а значит,
$g_0\frac{u(f)}{v(f)}\in S$.
Пусть $\lambda, \lambda_1, ...,\lambda_s$ --- все различные корни $v(f)$.
Вследствие замечаний \ref{RAS1} и \ref{drobi},\, 4), 5) для некоторого $n\in\mathbb N$ функция
$f_1=\left(D_{0,g_0}-\frac{1}{\lambda_1}I\right)^m\cdot\cdot\cdot
\left(D_{0,g_0}-\frac{1}{\lambda_s}I\right)^m\left(g_0\frac{u(f)}{v(f)}\right)$
имеет вид $f_1=g_0 h$, где $h$ --- ненулевая дробь из $Q(\lambda, k)$.
Применяя замечание \ref{drobi},\,6),
найдем оператор $A=\mathbb C[D_{0,g_0}]$ такой, что
$A(R_2)=g_0q_{\lambda, 1}$. Значит,
все функции $q_{\lambda,1}$, $\lambda\in\mathcal P(S)$,
содержатся в $S$.

Предположим, что $\mathcal P(S)$ бесконечно. Тогда $\mathcal P(S)$ имеет предельную точку в
$\overline{\mathbb C}\backslash\Omega$, и следовательно, множество
$\left\{q_{\lambda,1}\,|\,\lambda\in\mathcal P(S)\right\}$ полно
в $H(\Omega)$.
Это влечет, что $g_0 H(\Omega)\subset S$. Противоречие.

(iii): Если $\mathcal P(S)$ пусто, то $v(f)={\bf 1}$ для всех $f\in S$.
Пусть $\mathcal P(S)$ непусто, а значит, вследствие (ii), конечно. Предположим, что
$\sup\limits_{f\in S}{\rm deg}(v(f))=+\infty$. Тогда существуют $\lambda\in\mathcal P(S)$,
неограниченная возрастающая последовательность чисел $k_n\in\mathbb N$, для которых
$g_0 q_{\lambda, k_n}\in S$ для любого $n\in\mathbb N$.
Замечание \ref{drobi}, 6) влечет тогда, что все функции
$g_0q_{\lambda, k}$, $k\in\mathbb N$, содержатся в $S$. Поскольку множество
$\{q_{\lambda, k}\,|\, k\in\mathbb N\}$ полно в $H(\Omega)$ и $S$ замкнуто, то
$g_0 H(\Omega)\subset S$. Противоречие.
\end{proof}

\medskip
Далее для собственного замкнутого $D_{0,g_0}$-инвариантного подпространства $S$
пространства $H(\Omega)$, содержащегося в $g_0\mathbb C(z)$, введем его конечное
или пустое сингулярное кратное многообразие
$$
\Upsilon(S):=\{(\lambda, n_\lambda(S))\,|\,\lambda\in\mathcal P(S)\},
$$
где $n_\lambda(S)$ такие, как в лемме \ref{POLDROBI}\,(iii).

\medskip
\begin{lemma} \label{INVPOLYNOM}
{\rm (i)} $D_{0,p}\left(\mathbb C[z]_n\right) 
\subset \mathbb C[z]_n$
для любых целого $n\ge 0$,
многочлена $p\in \mathcal D(g_0)$ такого, что $p(0)=1$ и $n\ge {\rm deg}(p)-1$.

\noindent
{\rm (ii)} $D_{0, g_0}\left(g_0\mathbb C^-_{\Upsilon}(z)\right)\subset g_0\mathbb C^-_{\Upsilon}(z)$
для любого конечного множества $\Lambda\subset\mathbb C\backslash\Omega$,
любых $n_\lambda\in\mathbb N$ (здесь $\Upsilon:=\{(\lambda, n_\lambda)\,|\,\lambda\in\Lambda\}$).
\end{lemma}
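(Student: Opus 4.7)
The statement reduces in both parts to a direct verification using facts already catalogued in this section.

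For part (i), the argument is a degree count. Given $f\in\mathbb C[z]_n$ and $p\in\mathcal D(g_0)$ with $p(0)=1$, the numerator $f(t)-p(t)f(0)$ vanishes at $t=0$ because $p(0)f(0)=f(0)$, so $D_{0,p}(f)(t)=(f(t)-p(t)f(0))/t$ is again a polynomial. Its degree is at most $\max(\deg f,\deg p)-1\le\max(n,n+1)-1=n$, using the hypothesis $\deg p\le n+1$. Thus $D_{0,p}(f)\in\mathbb C[z]_n$, and the stated inclusion follows. The two hypotheses play complementary roles: $p(0)=1$ ensures divisibility by $t$, while $n\ge\deg p-1$ controls the degree of the quotient.

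For part (ii), I would work on the spanning set $\{g_0\,q_{\lambda,k}\mid \lambda\in\Lambda,\ 1\le k\le n_\lambda\}$ of $g_0\mathbb C^-_\Upsilon(z)$ and extend by linearity. Fix such a generator. Since $\lambda\in\mathbb C\setminus\Omega$ we have $\lambda\ne 0$ and $q_{\lambda,k}\in H(\Omega)$. Applying Remark \ref{RAS1} to the trivial factorization $g_0=g_0\cdot 1$ (with $h=g_0$, $v\equiv 1$, $u=q_{\lambda,k}$; note $v(0)=1$) we obtain
\[
D_{0,g_0}(g_0\,q_{\lambda,k})=g_0\,D_0(q_{\lambda,k}).
\]
It then suffices to show $D_0(q_{\lambda,k})\in Q(\lambda,k)$. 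For any $\mu\ne 0$ write $D_0=\bigl(D_0-\tfrac{1}{\mu}I\bigr)+\tfrac{1}{\mu}I$; by Remark \ref{drobi}, item 4, the operator $D_0-\tfrac{1}{\mu}I$ preserves $Q(\lambda,k)$, while $\tfrac{1}{\mu}I$ plainly does. Hence $D_0(q_{\lambda,k})\in Q(\lambda,k)\subset\mathbb C^-_\Upsilon(z)$, the last inclusion holding because $k\le n_\lambda$. Consequently $D_{0,g_0}(g_0\,q_{\lambda,k})\in g_0\mathbb C^-_\Upsilon(z)$, and linearity finishes the proof.

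No step is genuinely an obstacle; both parts are short consequences of the auxiliary calculations already in hand. The one conceptual point worth highlighting is that Remark \ref{RAS1}, applied to the decomposition $g_0=g_0\cdot 1$, converts the action of the weighted operator $D_{0,g_0}$ on the twisted subspace $g_0\mathbb C^-_\Upsilon(z)$ into the action of the plain $D_0$ on $\mathbb C^-_\Upsilon(z)$, after which invariance is immediate from the partial-fraction computations in Remark \ref{drobi}.
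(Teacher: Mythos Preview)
Your proof is correct and follows essentially the same route as the paper's. In part (i) you merely spell out the degree count the paper states in a single sentence; in part (ii) the paper likewise reduces to the identity $D_{0,g_0}(g_0 q_{\lambda,k})=g_0 D_0(q_{\lambda,k})$ and invokes Remark~\ref{drobi},\,4), and your decomposition $D_0=(D_0-\tfrac{1}{\mu}I)+\tfrac{1}{\mu}I$ just makes explicit why that item yields $D_0(Q(\lambda,k))\subset Q(\lambda,k)$.
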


\begin{proof}
(i): Если $f\in \mathbb C[z]_n$, то $D_{0,p}(f)(t)=\frac{f(t)-p(t)f(0)}{t}$ также многочлен
степени не выше $n$.

(ii): Вытекает из замечания \ref{drobi}, 4), поскольку
для любых $\lambda\in\mathbb C\backslash\Omega$,
$k\in\mathbb N$ выполняется равенство $D_{0,g_0}(g_0 q_{\lambda, k})=g_0D_0(q_{\lambda, k})$.
\end{proof}

\begin{remark} \label{GOSHA}
{\rm
Термин {\it обобщенный обратный сдвиг} был введен Ж. Годфруа и Дж. Шапиро
\cite[\S~3]{GOSHA}. Так в \cite{GOSHA} назван линейный непрерывный оператор в банаховом
пространстве $X$, ядро которого одномерно, а объединение ядер всех его
целых неотрицательных степеней плотно в $X$.
Для $n\in\mathbb N$ ядро $D_{0,g_0}^n$ в (ненормируемом) пространстве Фреше $H(\Omega)$ совпадает
с $g_0\mathbb C[z]_{n-1}$ (см. замечание \ref{drobi},\,1)).
Поэтому $\bigcup\limits_{n\ge 0}{\rm Ker}(D_{0,g_0}^n)=g_0\mathbb C[z]$
и замыканием этого множества в $H(\Omega)$ является $g_0 H(\Omega)$.
Значит, второе условие в определении обобщенного обратного
сдвига в смысле \cite{GOSHA} для названного нами так же оператора
$D_{0, g_0}$ выполняется тогда и только тогда, когда $g_0$ не имеет нулей в $\Omega$.
}
\end{remark}

\section{Основные результаты}

Ситуация, когда $g_0$ не имеет нулей в $\Omega$, сводится к случаю $g_0\equiv 1$ с
помощью следующего простого соображения.

\begin{lemma} \label{SVEDENIE} Пусть функция $g_0$ не имеет нулей в $\Omega$. Множество
$S\subset H(\Omega)$ является собственным замкнутым $D_{0,g_0}$-инвариантным подпространством
$H(\Omega)$ тогда и только тогда, когда $\frac{1}{g_0}S$ является собственным
замкнутым $D_0$-инвариантным подпространством $H(\Omega)$.
\end{lemma}

\begin{proof}
Данное утверждение вытекает из равенства
$D_{0,g_0}(f)=g_0 D_0\left(f/g_0\right)$, $f\in H(\Omega)$,
и того, что отображение $f\mapsto \ g_0 f$, $f\in H(\Omega)$, является линейным
топологическим изоморфизмом $H(\Omega)$ на себя.
\end{proof}

Для подмножеств $M_1, M_2$ линейного пространства
$L$ через $M_1+M_2$, как обычно, обозначим их сумму: $M_1+M_2:=\{a+b\,|\, a\in M_1,\, b\in M_2\}$.

\subsection{Случай области, отличной от комплексной плоскости}

\begin{theorem} \label{NOZERO} Пусть $\Omega$ --- односвязная область в $\mathbb C$,
содержащая начало, $\Omega\ne\mathbb C$ и функция $g_0$ не имеет нулей в $\Omega$.

\begin{itemize}
\item[(i)] Для любого $n\in\mathbb N\cup\{-\infty, 0\}$,  конечного или пустого
множества $\Lambda\subset\mathbb C\backslash\Omega$,
кратного многообразия $\Upsilon=\{(\lambda, n_\lambda)\,|\, \lambda\in\Lambda\}$
в $\mathbb C\backslash\Omega$ множество $g_0(\mathbb C[z]_n + \mathbb C_\Upsilon^-(z))$
является замкнутым $D_{0,g_0}$-инвариант\-ным подпространством $H(\Omega)$.
При этом оно является собственным тогда и только тогда, когда $n\ne-\infty$ или $\Lambda$ непусто.
\item[(ii)] Любое собственное замкнутое  $D_{0,g_0}$-инвариантное
подпространство $S$ пространства $H(\Omega)$ содержится в $g_0\mathbb C(z)$ и выполняется равенство
$$
S=g_0\left(\mathbb C[z]_{n(S)} + \mathbb
C_{\Upsilon(S)}^-(z)\right).
$$
\end{itemize}
\end{theorem}

\begin{proof}
Воспользуемся леммой \ref{SVEDENIE}, сводящей задачу к рассмотрению функции ${\bf 1}$
(вместо $g_0$).

(i): $D_{0}$-инвариантность собственных подпространств $\mathbb C[z]_n$ и
$\mathbb C_\Upsilon^-(z)$ доказана в лемме \ref{INVPOLYNOM}. Их замкнутость следует из их конечномерности.

(ii): Пусть $S$ --- собственное замкнутое $D_{0}$-инвариантное подпространство
$H(\Omega)$. Поскольку функция ${\bf 1}$ не имеет нулей, то $S\subset\mathbb C(z)$.
Пусть $n=n(S)$ --- максимальная степень многочленов $r(f)$, $f\in S\backslash\{0\}$,
в представлении (\ref{PREDSTAVLENIE}),
а $\Upsilon=\Upsilon(S)=\{(\lambda, n_\lambda(S))\,|\,\lambda\in\mathcal P(S)\}$ ---
сингулярное кратное многообразие $S$. Покажем, что
$S=\mathbb C[z]_n+\mathbb C^-_{\Upsilon}(z)$.
Ясно, что $S\subset\mathbb C[z]_n+\mathbb C^-_{\Upsilon}(z)$.
По лемме \ref{POLDROBI}\,(i) \,
$\mathbb C[z]_n\subset S$.

Зафиксируем теперь $\lambda\in\mathcal P(S)$, если $\mathcal P(S)\ne\emptyset$.
Поскольку $\mathbb C[z]_n\subset S$, то подпространство $S$ содержит
некоторую функцию $f=\sum\limits_{j=1}^{n_\lambda} \alpha_j q_{\lambda,j}
+ \sum\limits_{\nu\in\mathcal P(S), \nu\ne\lambda}\sum\limits_{k=1}^{n_\nu}\beta_{\nu, k}q_{\nu, k}$
($\alpha_j, \beta_{\nu,k}\in\mathbb C$), \,$\alpha_{n_\lambda}\ne 0$.
Из замечаний \ref{drobi}, 3), 5), 6) вытекает, что
все функции $q_{\lambda, k}$, $\lambda\in\mathcal P(S)$, $1\le k\le n_\lambda$,
принадлежат $S$. Так как система
$\left\{q_{\lambda, k}\,|\,\lambda\in\mathcal P(S), 1\le k\le n_\lambda\right\}$
образует базис в пространстве $\mathbb C^-_{\Upsilon}(z)$, то $\mathbb C^-_{\Upsilon}(z)\subset S$.
Таким образом, $S=\mathbb C[z]_n+\mathbb C^-_{\Upsilon}(z)$.
\end{proof}

\medskip
\begin{theorem} \label{WITHZERO}
Пусть $\Omega$ --- односвязная область в $\mathbb C$, содержащая начало,
$\Omega\ne\mathbb C$ и функция $g_0$ имеет нули в $\Omega$.
\begin{itemize}
\item[(i)] Для любого непустого кратного многообразия $W\prec W(g_0)$ в $\Omega$ множество $S(W)$
является собственным замкнутым $D_{0, g_0}$-инвариан\-тным подпространством
$H(\Omega)$.
\item[(ii)]
Для любого многочлена $p\in\mathcal D(g_0)$,
любых целого $n\ge 0$ такого,
что $n\ge {\rm deg}(p)-1$, или $n=-\infty$,
конечного или пустого кратного многообразия $\Upsilon=\{(\lambda, n_\lambda)\, |\,\lambda\in\Lambda\}$
в $\mathbb C\backslash\Omega$ множество
$\frac{g_0}{p}\mathbb C[z]_n + g_0\mathbb C^-_\Upsilon(z)$
является замкнутым $D_{0,g_0}$-инвариан\-тным подпространством $H(\Omega)$.

При этом оно собственное тогда и только тогда, когда $n\ne-\infty$ или $\Upsilon$ непусто.
\item[(iii)]
Для любого собственного замкнутого $D_{0, g_0}$-инвариантного подпространства $S$
пространства $H(\Omega)$
либо $S=S(W(S))$, либо $S\subset g_0\mathbb C(z)$ и
выполняется равенство
$S=\frac{g_0}{p_S}\mathbb C[z]_{n(S)} + g_0\mathbb C^-_{\Upsilon(S)}(z)$.
\end{itemize}
\end{theorem}

\begin{proof}
(i): Множество $S(W)$ является собственным замкнутым подпространством $H(\Omega)$.
Пусть $W=\{(\lambda_k, m_k)\}$.
Возьмем $f\in S(W)$. Зафиксируем $k$. Тогда $f(t)=(t-\lambda_k)^{m_k}f_k(t)$,
$g_0(t)=(t-\lambda_k)^{m_k}h(t)$, $t\in\Omega$, где $f_k, h\in H(\Omega)$.
Поэтому для функции
$$
D_{0,g_0}(f)(t)=(t-\lambda_k)^{m_k}\frac{f_1(t) - h(t)f(0)}{t}
$$
$\lambda_k$ --- нуль кратности не меньше $m_k$. Значит, $D_{0,g_0}(f)\in S(W)$.

(ii): Пусть $S$ такое, как в (ii). Тогда $S$ --- замкнутое подпространство
$H(\Omega)$. Инвариантность $S$ относительно $D_{0,g_0}$ следует из леммы \ref{INVPOLYNOM}.
При этом подпространство $\frac{g_0}{p}\mathbb C[z]_n+g_0\mathbb C_\Upsilon^-(z)$ не является собственным
тогда и только тогда, когда оно тривиально. Это равносильно тому, что $n=-\infty$ и $\Upsilon$ пусто.

(iii): Предположим, что $S\backslash\left(g_0\mathbb C(z)\right)\ne\emptyset$.
По лемме \ref{MNOGOOBRASIE} множество $Z(S)$ общих нулей функций из
$S$ в $\Omega$ непусто и $W\prec W(g_0)$. Отсюда следует, что $0\notin Z(S)$.
Пусть $v$ --- $g_0$-экстремальная функция $S$.
По теореме Вейерштрасса \cite[теорема 3.3.1]{BEG} существует голоморфная в $\Omega$ функция $u$
с нулевым многообразием $W(S)$ и такая, что $u(0)=1$.
Тогда $g_0/u, v/u\in H(\Omega)$ и $S(W(S))=u H(\Omega)$.
Вследствие свойства расщепляемости (замечание \ref{RAS1})
\begin{equation} \label{RAS2}
D_{0,g_0}^n(v)=u D_{0,g_0/u}^n\left(\frac{v}{u}\right), \,\, n\ge 0.
\end{equation}

Рассмотрим случай, когда $v\notin g_0\mathbb C(z)$. Тогда $\frac{v}{u}\notin\frac{g_0}{u}\mathbb C(z)$.
При этом $v/u$ и $g_0/u$ не имеют общих нулей в $\Omega$.
По теореме \ref{cycl} функция $v/u$ --- циклический вектор $D_{0,g_0/u}$ в $H(\Omega)$.
Следовательно, в силу (\ref{RAS2}), $u H(\Omega)\subset S$, а значит, $S= S(W(S))$.

Если же $v\in g_0\mathbb C(z)$, то возьмем $f\in S\backslash\left(g_0\mathbb C(z)\right)$.
По лемме \ref{LEMMAEXT} найдется $\alpha\in\mathbb C\backslash\{0\}$,
для которого $v_0=f+\alpha v$ --- тоже $g_0$-экстремальная функция $S$.
При этом $v_0\notin g_0\mathbb C(z)$. Отсюда, как выше
(для функции $v_0$ вместо $v$), следует, что $S=S(W(S))$.

Пусть $S\subset g_0\mathbb C(z)$.
Применяя рассуждения, аналогичные приведенным при доказательстве теоремы \ref{NOZERO}\,(ii)
(они используют лемму \ref{POLDROBI}\,(i), замечания \ref{drobi}, 3), 5), 6) и равенство $D_{0,g_0}(g_0 h)=g_0 D_0(h)$,
$h\in H(\Omega)$), получим,
что $S=\frac{g_0}{p}\mathbb C[z]_{n(S)} + g_0\mathbb C^-_{\Upsilon(S)}(z)$.
\end{proof}

\subsection{Случай $\Omega=\mathbb C$} Выделим отдельно случай, когда
$\Omega$ совпадает со всей комплексной плоскостью. В этой ситуации
$\mathbb C\backslash\Omega=\emptyset$. Поэтому вследствие доказательств
теорем 2,\,3 имеют место следующие утверждения.

\begin{theorem} \label{PLANE}
Пусть $g_0$ --- целая функция такая, что $g_0(0)=1$.

\noindent
{\rm (I)} Предположим, что $g_0$ не имеет нулей в $\mathbb C$.

\noindent
\begin{itemize}
\item[(i)] Для любого целого $n\ge 0$ множество $g_0\mathbb C[z]_n$
является собственным замкнутым $D_{0,g_0}$-инвари\-ант\-ным подпространством $H(\mathbb C)$.
\item[(ii)] Любое собственное замкнутое  $D_{0,g_0}$-инвариантное
подпространство $S$ пространства $H(\mathbb C)$  вложено в $g_0\mathbb C[z]$,
и выполняется равенство $S=g_0\mathbb C[z]_{n(S)}$.
\end{itemize}

\noindent
{\rm (II)} Предположим, что $g_0$ имеет нули в $\mathbb C$.
\begin{itemize}
\item[(iii)] Для любого непустого кратного многообразия $W\prec W(g_0)$ в $\mathbb C$ множество $S(W)$
является собственным замкнутым $D_{0, g_0}$-инвариан\-тным подпространством
$H(\mathbb C)$.
\item[(iv)]
Для любых многочлена $p\in\mathcal D(g_0)$, целого $n\ge 0$ такого,
что $n\ge {\rm deg}(p)-1$, множество
$\frac{g_0}{p}\mathbb C[z]_n$
является собственным замкнутым $D_{0,g_0}$-инвариантным подпространством $H(\mathbb C)$.
\item[(v)]
Для любого собственного замкнутого $D_{0, g_0}$-инвариантного подпространства $S$
пространства $H(\mathbb C)$ либо $S=S(W(S))$, либо
$S\subset g_0\mathbb C[z]$ и выполняется равенство $S=\frac{g_0}{p_S}\mathbb C[z]_{n(S)}$.
\end{itemize}
\end{theorem}

Из теорем 2--4 вытекает

\begin{corollary} Пусть $\Omega$ --- односвязная область в $\mathbb C$, содержащая начало.
Оператор $D_{0,g_0}$ является одноклеточным в $H(\Omega)$ тогда
и только тогда, когда $\Omega=\mathbb C$ и $g_0$ не имеет нулей в $\mathbb C$.

Если это так, то собственными замкнутыми $D_{0,g_0}$-инвариантными подпространствами
$H(\Omega)$ являются $g_0\mathbb C[z]_n$, $n\ge 0$, и только они.
\end{corollary}




\begin{flushleft}
О.~А.~Иванова

Южный федеральный университет, \\
Институт математики, механики и компьютерных наук им.
И.~И.~Воровича, \\
Ростов-на-Дону, Россия

E-mail: neo$_{-}$ivolga@mail.ru
\end{flushleft}

\begin{flushleft}
С.~Н.~Мелихов

Южный федеральный университет, \\
Институт математики, механики и компьютерных наук им.
И.~И.~Воровича,\\
Ростов-на-Дону, Россия;\\
Южный математический институт Владикавказского научного центра
РАН, \\
Владикавказ, Россия

E-mail: snmelihov@yandex.ru, melih@math.rsu.ru

\end{flushleft}

\begin{flushleft}
Ю.~Н.~Мелихов

Военная академия ВКО им.~Г.~К.~Жукова, \\
Тверь, Россия

E-mail: melikhow@mail.ru

\end{flushleft}

\end{document}